\newcommand{\R}{\mathbb{R}}
\newcommand{\G}{\Gamma}
\newcommand{\F}{\mathbb{F}}
\newcommand{\e}{\epsilon}
\newcommand{\la}{\langle}
\newcommand{\ra}{\rangle}
\newcommand{\bd}{\partial}
\newcommand{\xalg}{\rtimes_{\mathrm{alg}}}
\newcommand{\xr}{\rtimes_{\mathrm{r}}}
\newtheorem{thm}{Theorem}[section]
\theoremstyle{definition}
\newtheorem*{ack}{Acknowledgments}
\begin{document}      
\title{Two applications of strong hyperbolicity}
\author{Bogdan Nica}
\address{\newline Department of Mathematics and Statistics \newline McGill University, Montreal}
\date{February 25, 2017}
\subjclass[2010]{20F67}
\keywords{Hyperbolic group, strong hyperbolicity, boundary crossed-product, KMS states, isometric actions on $\ell^p$-spaces}

\begin{abstract}
We present two analytic applications of the fact that a hyperbolic group can be endowed with a strongly hyperbolic metric. The first application concerns the crossed-product C$^*$-algebra defined by the action of a hyperbolic group on its boundary. We construct a natural time flow, involving the Busemann cocycle on the boundary. This flow has a natural KMS state, coming from the Hausdorff measure on the boundary, which is furthermore unique when the group is torsion-free. The second application is a short new proof of the fact that a hyperbolic group admits a proper isometric action on an $\ell^p$-space, for large enough $p$. 
\end{abstract}

\maketitle

\section{Introduction}
Hyperbolicity, in the sense of Gromov, is a coarse notion of negative curvature for metric spaces. In turn, a hyperbolic group is a group which admits a proper and cocompact isometric action on a geodesic hyperbolic space. Such a space is said to be a geometric model for the group. Hyperbolic groups form a large class of groups, and they have received a lot of attention--usually from an algebraic and geometric perspective. Herein, the aims are mostly analytic. 

A sharp notion of negative curvature for metric spaces is captured by the CAT$(-1)$ condition. This condition implies, and predates, hyperbolicity. Gromov's Jugendtraum \cite[p.193]{Gro}, that every hyperbolic group admits a geometric model which is CAT$(-1)$, is still wildly open. It is expected to fail, but no counterexamples are known. Let us mention, however, that the past decade has seen great strides in the CAT$(0)$ direction. We now understand that an extraordinary number of hyperbolic groups act on CAT$(0)$ cube complexes.

The search for enhanced geometric models of hyperbolic groups is often motivated by analytic needs. We use the term `enhanced hyperbolicity' as a broad and informal way of describing hyperbolicity with additional CAT$(-1)$ properties. Such desirable properties depend on the specific context. In \cite{NS}, we introduced the metric notion of \emph{strong hyperbolicity}. We find this idea satisfactory on two accounts. Firstly, it is an intermediate metric notion between the CAT$(-1)$ condition and hyperbolicity, which grants the additional CAT$(-1)$ properties that, so far, have come up in analytic applications. Secondly, it turns out that every hyperbolic group admits a geometric model which is strongly hyperbolic. We briefly discuss strong hyperbolicity in Section~\ref{sec: strong} below, and we refer to  \cite{NS} for more details.

The purpose of this note is to further illustrate the use of strong hyperbolicity in studying analytic aspects of hyperbolic groups. The first application concerns the C$^*$-crossed product $C(\bd\G)\rtimes \G$ defined by the action of a hyperbolic group $\G$ on its boundary $\bd \G$. We use strong hyperbolicity to construct a natural $\R$-flow on $C(\bd\G)\rtimes \G$, from the Busemann cocycle on the boundary. We show that the Hausdorff measure on the boundary defines a KMS state for this Busemann flow, with inverse temperature equal to the Hausdorff dimension of the boundary. Furthermore, this is the unique KMS state for the flow when $\G$ is torsion-free. Previously, these facts were known in two particular cases: for free groups \cite{CM}, respectively for uniform lattices in $\mathrm{SO}(n,1)$ \cite{Lott}. Compare also \cite{IK}.

The second application is a short new proof of the fact that a hyperbolic group admits a proper isometric action on an $\ell^p$-space, for large enough $p\in [1,\infty)$. This result is due to Yu \cite{Yu}, and different proofs have been subsequently offered in \cite{Bou, Nic, AL}. The argument explained in Section~\ref{sec: haag} provides a link between Haagerup's original construction for free groups \cite{Haa}, and the boundary construction of \cite{Nic}.

\section{Strong hyperbolicity}\label{sec: strong}
\subsection{Strongly hyperbolic spaces} Let $X$ be a metric space. We write $|x,y|$ for the distance between two points $x,y\in X$. Recall that the Gromov product with respect to a basepoint $o$ is defined by the formula 
\begin{align*}
\la x,y\ra_o=\tfrac{1}{2}\big(|o,x|+|o,y|-|x,y|\big).
\end{align*} 

The metric space $X$ is said to be \emph{strongly hyperbolic} if the Gromov product satisfies
\begin{align*}
e^{-\la x,y\ra_o}\leq e^{-\la x,z\ra_o}+e^{-\la z,y\ra_o}
\end{align*}
for all $x,y,z,o\in X$. (Compare with the original definition \cite[Def.4.1]{NS}, involving an additional `visual' parameter $\e>0$. The present definition is the normalized case when $\e=1$, and this can always be achieved by rescaling the metric.) 

It is easily checked that a strongly hyperbolic space is, in particular, hyperbolic in the usual, Gromov sense. On the other hand, a CAT$(-1)$ space is strongly hyperbolic \cite[Thm.5.1]{NS}. To put it differently, strong hyperbolicity is a weak CAT$(-1)$ condition. For the purposes of this paper, a useful consequence of strong hyperbolicity is the following \cite[Thm.4.2]{NS}:

\begin{thm}\label{thm: good}
Let $X$ be a strongly hyperbolic space, and let $o\in X$ be a basepoint. Then the Gromov product $\la \cdot,\cdot\ra_o$ extends continuously to the bordification $X\cup\bd X$, and $e^{-\la \cdot,\cdot\ra_o}$ is a compatible metric on the boundary $\bd X$.
\end{thm}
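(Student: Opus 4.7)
The strategy I would adopt is to treat the strong hyperbolicity inequality as a genuine triangle inequality for the function
\begin{align*}
d(x,y) := e^{-\langle x,y\rangle_o},
\end{align*}
regarded as a pseudo-metric on $X$. Combining the inequality with the version obtained by swapping $y\leftrightarrow x'$ yields the 1-Lipschitz estimate
\begin{align*}
|d(x,y) - d(x',y)| \leq d(x,x'),
\end{align*}
and an analogous estimate in the second argument. This single estimate will do essentially all the work.

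Next, I would recall that $\partial X$ is defined as the set of equivalence classes of Gromov sequences $(x_n)$, i.e., sequences satisfying $\langle x_n, x_m\rangle_o \to \infty$, under the relation $(x_n)\sim(y_n)$ iff $\langle x_n, y_n\rangle_o\to\infty$. This is precisely the Cauchy condition, respectively the equivalence of Cauchy sequences, for the pseudo-metric $d$. Thus, given $\xi,\eta\in\partial X$ and representing Gromov sequences $(x_n),(y_n)$, the Lipschitz estimate implies that $d(x_n,y_n)$ is Cauchy in $\R$, and its limit depends only on the equivalence classes of $(x_n)$ and $(y_n)$. One then defines $d(\xi,\eta)$ (equivalently $\langle\xi,\eta\rangle_o\in[0,\infty]$) as this limit; the mixed case $\xi\in\partial X$, $y\in X$ is handled the same way. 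Passing to the limit in the strong hyperbolicity inequality transports the triangle inequality for $d$ to all of $X\cup\partial X$, and passing to the limit in the Lipschitz estimate yields continuity of the extended Gromov product on the bordification.

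To conclude that $d$ is a compatible metric on $\partial X$, I would verify the three metric axioms. Symmetry is inherited from the Gromov product. The triangle inequality is the limiting form of the strong hyperbolicity inequality from the previous step. Non-degeneracy reduces to the equivalence relation defining $\partial X$: $d(\xi,\eta)=0$ is exactly the condition $\langle x_n,y_n\rangle_o\to\infty$ on representatives, which says $\xi=\eta$. Compatibility with the bordification topology on $\partial X$ then follows because a sequence $\xi_n\to\xi$ in the usual Gromov topology is characterized by $\langle\xi_n,\xi\rangle_o\to\infty$ (for representatives), which is the same as $d(\xi_n,\xi)\to 0$.

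The genuinely delicate step, I expect, is ensuring that the extension in the second paragraph is well-defined and single-valued in a uniform way across all four cases (both points in $X$, both in $\partial X$, or mixed), together with a careful identification of the $d$-topology on $\partial X$ with the boundary topology inherited from the bordification. Everything else follows from the master Lipschitz estimate by routine bookkeeping.
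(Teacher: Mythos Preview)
The paper does not actually prove this theorem: it is quoted as \cite[Thm.~4.2]{NS}, with no argument given in the present text. So there is no in-paper proof to compare your proposal against.

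That said, your strategy is sound and is essentially the natural route (and, as far as one can tell, the one taken in \cite{NS}). One small correction worth flagging: $d(x,y)=e^{-\langle x,y\rangle_o}$ is \emph{not} a pseudo-metric on $X$, since $d(x,x)=e^{-|o,x|}$ is typically nonzero. Strong hyperbolicity supplies symmetry and the triangle inequality, but the vanishing-on-the-diagonal axiom fails on $X$ itself. This does not actually damage your argument---the Lipschitz estimate $|d(x,y)-d(x',y)|\le d(x,x')$ follows from the triangle inequality alone, and Gromov sequences are still exactly those with $d(x_n,x_m)\to 0$ (note that $\langle x_n,x_m\rangle_o\le\min(|o,x_n|,|o,x_m|)$ forces such sequences to escape to infinity, so the defect $d(x_n,x_n)=e^{-|o,x_n|}$ tends to $0$ along them). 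But you should avoid calling $d$ a pseudo-metric on $X$; it only becomes a genuine metric once restricted to $\partial X$. With that caveat, the rest of your outline---existence and well-definedness of the limit via the Lipschitz bound, passage of the triangle inequality to the boundary, and identification of the $d$-topology on $\partial X$ with the Gromov topology---is correct.
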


\subsection{Strongly hyperbolic metrics for hyperbolic groups} Let $\G$ be a hyperbolic group. To avoid trivialities, we will always assume that $\G$ is non-elementary. A metric on $\G$ is said to be \emph{admissible} if it enjoys the following properties:

\begin{itemize}
\item[(i)] it is equivariant: $|gx,gy|=|x,y|$ for all $g,x,y\in \G$;
\item[(ii)] it is roughly geodesic: there is a constant $C\geq 0$, so that for every pair of points $x,y\in \G$ there is a (not necessarily continuous) map $\gamma: [a,b]\to \G$ satisfying $\gamma(a)=x$, $\gamma(b)=y$, and $|s-t|-C\leq |\gamma(s),\gamma(t)|\leq |s-t|+C$ for all $s,t\in [a,b]$; 
\item[(iii)] it is quasi-isometric to any word metric on $\G$.
\end{itemize}
An admissible metric on $\G$ is hyperbolic, since hyperbolicity is a quasi-isometry invariant for roughly geodesic spaces. 

Admissible metrics naturally arise from geometric models for $\G$. Let $X$ be a geodesic hyperbolic space on which $\G$ acts isometrically, properly and cocompactly, and pick a basepoint $o\in X$. Then the orbit metric on $\G$, given by $|g,h|_o:=|go,ho|$, is admissible. (An innocuous issue is that $o$ might have non-trivial stabilizer. This is easily made irrelevant either by language, allowing pseudo-metrics instead of metrics, or by coarse bookkeeping.)

If $\G$ admits a CAT$(-1)$ geometric model, then the induced orbit metrics on $\G$ are strongly hyperbolic. The following theorem is a general statement to that effect, circumventing the delicate question whether a CAT$(-1)$ geometric model is always available.

\begin{thm}
There exist admissible metrics on $\G$ which are strongly hyperbolic.
\end{thm}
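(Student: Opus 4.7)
My plan is to start with a word metric on $\Gamma$ and refine it into an equivariant metric satisfying the strong hyperbolicity inequality, while retaining admissibility. The three conditions (i)--(iii) are inexpensive; the real content is upgrading ordinary $\delta$-hyperbolicity to its exponential/subadditive form.

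First, I will fix a word metric $d_0$ coming from a finite symmetric generating set. This metric is equivariant, geodesic (hence roughly geodesic with $C=0$), quasi-isometric to any other word metric, and $\delta$-hyperbolic for some $\delta\geq 0$. Any subsequent metric that sits within bounded additive distance of a positive multiple of $d_0$ will automatically inherit (i)--(iii). So the task reduces to a purely analytic one: smoothing $d_0$ into a metric whose exponentiated Gromov product is honestly subadditive rather than merely approximately ultrametric.

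The decisive step will be to construct a $\Gamma$-equivariant, symmetric, positive kernel $\rho$ on $\Gamma\times\Gamma$ that plays the role of an exponentiated Gromov product and satisfies the subadditivity bound
\[
\rho(x,y)\leq \rho(x,z)+\rho(z,y),\qquad x,y,z\in\Gamma,
\]
while remaining comparable to $e^{-\e\langle x,y\rangle_e}$ for a sufficiently small parameter $\e>0$. Following \cite{NS}, the natural candidate is an averaged expression of the shape
\[
\rho(x,y)=\sum_{g\in\Gamma}\omega(g)\,e^{-\e\langle x,y\rangle_g},
\]
for an equivariant weight $\omega$ concentrated near the ``centers'' of $x$ and $y$. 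The point is that $\delta$-hyperbolicity for the Gromov product, once exponentiated with $\e$ small relative to $\delta$, turns an additive defect into a multiplicative factor arbitrarily close to $1$, and averaging over this window smooths the resulting near-ultrametric inequality into genuine subadditivity. Given such a $\rho$, I will set $\langle x,y\rangle'_e:=-\log\rho(x,y)$ and
\[
|x,y|:=\langle x,x\rangle'_e+\langle y,y\rangle'_e-2\langle x,y\rangle'_e.
\]
This is an equivariant pseudo-distance on $\Gamma$, and its Gromov product based at $e$ recovers $\langle\cdot,\cdot\rangle'_e$ up to a bounded additive constant. The subadditivity of $\rho$ then reads as the strong hyperbolicity inequality for $|\cdot,\cdot|$ based at $e$; equivariance propagates it to all basepoints. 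Comparability $\rho(x,y)\asymp e^{-\e\langle x,y\rangle_e}$ translates into $|x,y|$ lying within bounded additive distance of $\e\,d_0(x,y)$, which secures rough geodesicity and quasi-isometry to a word metric, and a final rescaling by $1/\e$ normalizes the visual parameter to $1$ as in Section~\ref{sec: strong}.

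The hard part will be the construction of the kernel $\rho$. Equivariance and subadditivity pull in opposite directions---the former asks for a $\Gamma$-invariant sum over the whole group, while the latter is sensitive to the combinatorics of individual geodesic triangles. Reconciling them requires a judicious choice of the weight $\omega$ and of the parameter $\e$; in \cite{NS} this is achieved via Mineyev-style equivariant cocycles along geodesics, with $\e$ tuned relative to $\delta$ in order to upgrade the approximate ultrametric inequality into strict subadditivity.
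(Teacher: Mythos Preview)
The paper does not actually prove this theorem in-text: it simply records that the result was first obtained (implicitly) by Mineyev's combinatorial construction \cite{Min}, and then points to \cite[Thm.~6.1]{NS}, where the authors show that the \emph{Green metric} associated to any symmetric, finitely supported random walk on $\Gamma$ is, after rescaling, strongly hyperbolic. So the paper's ``proof'' is a citation, and the route it highlights is the Green-metric one. Your sketch takes a completely different path --- kernel averaging over basepoints in the spirit of Mineyev --- and your attribution to \cite{NS} is off: the construction you describe is not what \cite{NS} does for hyperbolic groups.

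Beyond the attribution, there is a genuine gap in the sketch. You define $\langle x,y\rangle'_e=-\log\rho(x,y)$, build $|x,y|$ from it, and then assert that the Gromov product of $|\cdot,\cdot|$ at $e$ agrees with $\langle\cdot,\cdot\rangle'_e$ ``up to a bounded additive constant'', after which ``the subadditivity of $\rho$ reads as the strong hyperbolicity inequality''. But an additive $O(1)$ error in the Gromov product becomes a multiplicative constant after exponentiation, so subadditivity of $\rho$ would only yield
\[
e^{-\langle x,y\rangle_o}\leq M\big(e^{-\langle x,z\rangle_o}+e^{-\langle z,y\rangle_o}\big)
\]
for some $M>1$ --- which is ordinary hyperbolicity, not strong hyperbolicity. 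The whole point of the notion is that no such constant is allowed, so you cannot afford any additive slack at this step. Relatedly, if the weight $\omega$ is ``concentrated near the centers of $x$ and $y$'' then it depends on the pair $(x,y)$, and it is not clear how the triangle inequality $\rho(x,y)\leq\rho(x,z)+\rho(z,y)$ is to be read when the three terms involve three different weights. Mineyev's actual construction handles these issues, but it is considerably more delicate than your outline; you have correctly located where the difficulty lies without resolving it.
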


Implicitly, this was first proved in \cite{Min} by an involved construction of combinatorial flavour. In \cite{NS} we show that there are, in fact, \emph{natural} admissible metrics that are strongly hyperbolic. Namely, the Green metric defined by any symmetric and finitely supported random walk on $\G$ is, up to a rescaling, strongly hyperbolic \cite[Thm.6.1]{NS}.

\section{The Busemann flow for boundary actions of hyperbolic groups} 
\subsection{Preliminaries}\label{prelim} Let us start with some general facts on cocycles, flows, and KMS states for crossed-products. These matters are well-known, and they go back to Renault's foundational work \cite{Ren}. A minor difference is that we choose to work with reduced crossed-products, rather than full crossed-products.

Let $G$ be a discrete countable group acting by homeomorphisms on a compact Hausdorff space $\Omega$. The algebraic crossed-product $C(\Omega)\xalg G$ consists of finite sums of the form $\sum \phi_g g$, where $\phi_g\in C(\Omega)$ and $g\in G$. This is an algebra for the multiplication whose defining rule is that $(\phi g)(\psi h)=\phi(g.\psi)gh$. The \emph{reduced crossed-product} $C(\Omega)\xr G$ is the reduced C$^*$-completion of $C(\Omega)\xalg G$.

A \emph{flow} on a C$^*$-algebra $A$ is a strongly continuous group homomorphism $\sigma:\R\to \mathrm{Aut}(A)$. On crossed-products, cocycles give rise to flows, as follows. Consider a cocycle $c:G\to C(\Omega,\R)$, the real-valued continuous maps on $\Omega$. (Throughout this paper, the cocycle property is in the additive sense: $c(gh)=c(g)+g.c(h)$ for all $g,h\in G$.) Then there is a flow $\sigma^{c}$ on $C(\Omega)\xr G$, defined by the formula
\begin{align*}
\sigma^{c}_t\big(\sum \phi_g g\big)=\sum e^{itc(g)} \phi_g g
\end{align*}
on $C(\Omega)\xalg G$. 

Let $\sigma$ be a flow on a C*-algebra $A$, and $\beta\in \R$. A state $\omega$ on $A$ is said to be a $\beta$-\emph{KMS state} for $\sigma$ if 
\begin{align*}
\omega(b \sigma_{i\beta}(a))=\omega(ab)
\end{align*}
for all $a,b$ in a dense subalgebra of $\sigma$-entire elements of $A$. We refrain from defining the notion of $\sigma$-entire elements of $A$, except to mention that the $\sigma$-entire elements form a dense $*$-subalgebra of $A$. The parameter $\beta$ is called \emph{inverse temperature}.

Now consider the flow $\sigma^{c}$ on $C(\Omega)\xr G$, induced by a cocycle $c$ as above. Then all elements in $C(\Omega)\xalg G$ are entire. Let $\omega$ be a $\beta$-KMS state for $\sigma^{c}$. As for any state on $C(\Omega)\xr G$, the restriction of $\omega$ to $C(\Omega)$ defines a probability $\mu$ on $\Omega$. (Here, and in what follows, we use the term `probability' as a shorthand for `regular Borel probability measure'.) The KMS condition means that $\mu$ is $e^{\beta c}$-conformal, in the sense that 
\begin{align*}
\frac{d(g_*\mu)}{d\mu}=e^{\beta c(g)}
\end{align*} for each $g\in G$. Conversely, let $\mu$ be an $e^{\beta c}$-conformal probability on $\Omega$. Consider the state $\omega_\mu$ on $C(\Omega)\xr G$, defined by
\begin{align*}
\omega_\mu\big(\sum \phi_g g\big)=\int \phi_1 \:d\mu
\end{align*}
on $C(\Omega)\xalg G$. In other words, $\omega_\mu$ is the composition of the standard expectation $C(\Omega)\xr G\to C(\Omega)$ with $\mu$, viewed as a state on $C(\Omega)$. Then $\omega_\mu$ is a $\beta$-KMS state for the cocycle flow $\sigma^{c}$. 

The following result says that the previous construction is the only source of KMS states for $\sigma^{c}$, whenever $c$ satisfies a certain non-vanishing condition.

\begin{thm}[Kumjian - Renault \cite{KR}]\label{thm: KR}
Consider a cocycle flow $\sigma^{c}$ on $C(\Omega)\xr G$. Assume that, for all non-trivial $g\in G$, $c(g)$ is non-zero at each fixed point of $g$. Then every $\beta$-KMS state for $\sigma^{c}$ is of the form $\omega_\mu$ for some $e^{\beta c}$-conformal probability $\mu$ on $\Omega$.
\end{thm}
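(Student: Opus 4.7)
The plan is to show that any $\beta$-KMS state $\omega$ is \emph{diagonal} with respect to the canonical conditional expectation $C(\Omega)\xr G \to C(\Omega)$, i.e.\ $\omega(\phi u_g) = 0$ for every $\phi \in C(\Omega)$ and every $g \neq 1$, where $u_g$ denotes the canonical element implementing $g$. Coupled with the fact that the restriction $\mu := \omega|_{C(\Omega)}$ is $e^{\beta c}$-conformal (as noted in the preliminaries), this immediately yields $\omega = \omega_\mu$. By density and the $\sigma^c$-entireness of $C(\Omega)\xalg G$, the KMS identity applies to all elements of the form $\phi u_g$. For each $g$, the bounded linear functional $\phi \mapsto \omega(\phi u_g)$ on $C(\Omega)$ is represented via Riesz by a complex regular Borel measure $\nu_g$ on $\Omega$, and the task becomes $\nu_g \equiv 0$ for $g \neq 1$.

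I would squeeze $\supp \nu_g$ between two sets that the hypothesis forces to be disjoint. For the first restriction, apply KMS with $b=1$ and $a = \phi u_g$; using $\sigma^c_{i\beta}(\phi u_g) = e^{-\beta c(g)} \phi u_g$, this yields $\omega\bigl((e^{-\beta c(g)} - 1)\phi u_g\bigr) = 0$ for every $\phi$. So the complex measure $(e^{-\beta c(g)} - 1)\cdot \nu_g$ vanishes, forcing $\supp \nu_g \subset \{c(g) = 0\}$ (assuming $\beta \neq 0$; the tracial case $\beta = 0$ is handled separately). For the second restriction, apply KMS with $a = \phi u_g$ and $b = \psi \in C(\Omega)$ to get $\omega\bigl(\psi e^{-\beta c(g)}\phi u_g\bigr) = \omega\bigl(\phi (g.\psi) u_g\bigr)$; on $\supp \nu_g$, where $e^{-\beta c(g)} = 1$, this reduces to $\int \phi(\psi - g.\psi)\, d\nu_g = 0$ for all $\phi, \psi \in C(\Omega)$. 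Varying $\phi$ and using that $C(\Omega)$ separates points of $\Omega$ gives $\supp \nu_g \subset \Fix(g)$. The hypothesis of the theorem says precisely that $\{c(g) = 0\} \cap \Fix(g) = \void$, so $\nu_g = 0$.

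The conceptual crux is the interplay between the two KMS invocations: the first (a $\sigma^c$-invariance of $\omega$ on entire elements) pins $\nu_g$ to the zero set of the cocycle value $c(g)$, while the second (a trace-like commutation between $C(\Omega)$-functions and $u_g$) pins it to the fixed-point set of $g$; the non-vanishing hypothesis is tailor-made to make these two localizations mutually incompatible. Neither constraint alone suffices---cocycles can vanish off fixed points, and actions can have fixed points without the cocycle vanishing there---so the hypothesis enters in an essential way. The remaining work, namely the Riesz representation for $\nu_g$, the $\sigma^c$-entireness of $C(\Omega)\xalg G$, and the separate handling of the degenerate case $\beta = 0$, is essentially bookkeeping.
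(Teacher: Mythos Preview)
The paper does not prove this theorem; it is quoted from Kumjian--Renault \cite{KR} and invoked as a black box in the proof of Theorem~\ref{thm a}. There is therefore no proof in the paper to compare your proposal against.

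On its own merits, your argument for $\beta\neq 0$ is correct and is the standard one: the KMS relation with $b=1$ forces $\nu_g$ to live on $\{c(g)=0\}$, the KMS relation with $b=\psi\in C(\Omega)$ then forces it to live on $\Fix(g)$, and the hypothesis makes these two sets disjoint. One caveat: your dismissal of the case $\beta=0$ as ``essentially bookkeeping'' is not warranted. When $\beta=0$ the first constraint evaporates entirely (the factor $e^{-\beta c(g)}-1$ is identically zero), and the remaining constraint $\supp\nu_g\subset\Fix(g)$ does not by itself force $\nu_g=0$. For a concrete failure, take $G=\Z$ acting trivially on a one-point space with cocycle $c(n)=n$; the non-vanishing hypothesis is satisfied, the crossed product is $C^*_r(\Z)\cong C(\mathbb{T})$, every state on it is a trace and hence a $0$-KMS state, yet only the canonical trace arises as $\omega_\mu$. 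So either the statement is to be read with $\beta\neq 0$ (which is all that the paper needs, since there $\beta=D>0$), or the $\beta=0$ case genuinely fails and cannot be patched by bookkeeping.
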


The non-vanishing condition could be thought of as a strong cohomological non-triviality. For if the cocycle $c:G\to C(\Omega,\R)$ is of the form $c(g)=g.\theta-\theta$, then $c(g)$ vanishes at each fixed point of $g$, for all $g\in G$.

\subsection{The boundary crossed product of a hyperbolic group}
Now let $\G$ be a hyperbolic group and consider the reduced crossed-product $C(\bd\G)\xr \G$, defined by the action of $\G$ on its boundary $\bd \G$. Endow $\G$ with a strongly hyperbolic, admissible metric.

A remarkable cocycle on $\G$ is the Busemann cocycle. To begin, there is the group Busemann cocycle, given by
\begin{align*}
b(g)(x)=2\la g,x \ra-|g| \qquad (x\in \G)
\end{align*}
for each $g\in \G$. Here, and in all that follows, the Gromov product is based at the identity, and we write $|g|$ for $|1,g|$, the distance from $g$ to the identity. The cocycle property for $b$ is easily checked. In fact, writing $b(g)(x)=|x|-|g^{-1}x|$ exhibits $b$ as a coboundary. 

Secondly, and more importantly for the purposes of this section, there is a boundary Busemann cocycle. By Theorem~\ref{thm: good}, the group Busemann cocycle extends, by continuity and as a continuous function, to the boundary. The boundary Busemann cocycle is given, for each $g\in \G$, by
\begin{align*}
b(g)(\xi)=2\la g,\xi \ra-|g| \qquad (\xi\in \bd\G).
\end{align*}
The boundary Busemann cocycle $b$ takes values in $C(\bd\G, \R)$, so it defines a flow $\sigma^b$ on $C(\bd\G)\xr \G$. 

On the other hand, by Theorem~\ref{thm: good}, once again, the Gromov product based at the identity induces a compatible metric
\begin{align*}
d(\xi_1,\xi_2)=e^{-\la \xi_1,\xi_2 \ra}
\end{align*} on $\bd \G$. Let $\mu$ be the probability on $\bd\G$ defined by normalizing the Hausdorff measure, and let $D$ denote the Hausdorff dimension of $\bd\G$.

\begin{thm}\label{thm a}
Consider the Busemann cocycle flow $\sigma^b$ on $C(\bd\G)\xr \G$. Then the probability $\mu$ induces a KMS state $\omega_\mu$ for $\sigma^b$, at inverse temperature $D$. If $\G$ is torsion-free, then $\omega_\mu$ is the unique KMS state for $\sigma^b$.
\end{thm}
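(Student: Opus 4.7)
The plan is to apply the Kumjian--Renault framework from Section~\ref{prelim}. The existence assertion reduces to showing that $\mu$ is $e^{Db}$-conformal; the uniqueness assertion, in the torsion-free case, requires verifying the non-vanishing hypothesis of Theorem~\ref{thm: KR} and then establishing uniqueness of the $e^{Db}$-conformal probability on $\bd\G$.

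For existence, the main step is that $\G$ acts on $(\bd\G, d)$ conformally, with the conformal factor of $g$ at $\xi$ equal to $e^{-b(g)(g\xi)}$. This follows from the $\G$-equivariance $\la g\xi_1, g\xi_2\ra_{e}=\la \xi_1, \xi_2\ra_{g^{-1}}$, combined with the boundary version of the change-of-basepoint identity
\begin{align*}
\la \xi_1,\xi_2\ra_{g^{-1}} - \la \xi_1,\xi_2\ra_{e} = \tfrac{1}{2}\big(b(g)(g\xi_1)+b(g)(g\xi_2)\big),
\end{align*}
which is obtained from its elementary finite analogue by recognizing Busemann limits (and using the cocycle identity $b(g^{-1})(g^{-1}\cdot)=-b(g)$). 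Assembling these yields
\begin{align*}
d(g\xi_1, g\xi_2) = \exp\!\big(-\tfrac{1}{2}(b(g)(g\xi_1)+b(g)(g\xi_2))\big)\cdot d(\xi_1,\xi_2),
\end{align*}
so taking $\xi_2\to\xi_1=\xi$ extracts the conformal factor. Since $\mu$ is the $D$-dimensional Hausdorff measure on $(\bd\G,d)$, a shadow/ball comparison of Patterson--Sullivan type gives $d(g_*\mu)/d\mu = e^{Db(g)}$, and the framework of Section~\ref{prelim} then produces the $D$-KMS state $\omega_\mu$.

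For uniqueness, assume $\G$ is torsion-free. Every non-trivial $g\in\G$ is of infinite order, hence acts loxodromically on $\bd\G$ with two fixed points $g^{\pm}$. Since $g^{n}\to g^{+}$ in the bordification, continuity of $b(g)$ gives
\begin{align*}
b(g)(g^{+}) = \lim_{n\to\infty}\big(|g^{n}|-|g^{n-1}|\big) = \ell(g),
\end{align*}
the stable translation length, with the last equality following from $|g^{n}|/n\to\ell(g)$ via a Ces\`aro argument; symmetrically $b(g)(g^{-})=-\ell(g)$. In particular $b(g)$ vanishes at no fixed point of $g$, so Theorem~\ref{thm: KR} applies and reduces uniqueness of the KMS state to uniqueness of the $e^{Db}$-conformal probability on $\bd\G$. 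This last is of Patterson--Sullivan nature: a Coornaert-type shadow lemma, transplanted to the strongly hyperbolic setting, shows that any two $e^{Db}$-conformal probabilities are equivalent with essentially bounded Radon--Nikodym derivative, and the $\G$-ergodicity of the common measure class, extracted from the same estimates, forces the derivative to be constant, hence equal to $1$ after normalization. This uniqueness of the conformal probability is the main obstacle: while it is classical for rank-one lattices, it must be re-executed in the abstract strongly hyperbolic framework using the metric and Busemann data delivered by Theorem~\ref{thm: good}; the conformality computation and the translation-length identification are, by comparison, routine.
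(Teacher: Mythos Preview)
Your proposal is correct and follows essentially the same route as the paper: both derive the metric conformality identity (your formula is the paper's $d(g\xi,g\eta)^2=e^{b(g^{-1})(\xi)}e^{b(g^{-1})(\eta)}d(\xi,\eta)^2$ rewritten via the cocycle relation $b(g^{-1})(\xi)=-b(g)(g\xi)$), invoke the Hausdorff-measure transformation rule to get $e^{Db}$-conformality of $\mu$, and then verify the Kumjian--Renault hypothesis by computing $b(g)(g^{\pm})=\pm\ell(g)$ using continuity of $b(g)$ on the bordification together with a Stolz--Ces\`aro step.

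One small omission: the theorem claims $\omega_\mu$ is the unique KMS state at \emph{any} inverse temperature, not just at $\beta=D$, so after applying Theorem~\ref{thm: KR} you must rule out $e^{\beta b}$-conformal probabilities for $\beta\neq D$ as well. The paper handles this by citing Coornaert and Blach\`ere--Ha\"{\i}ssinsky--Mathieu, whose results force both $D'=D$ and $\mu'=\mu$ simultaneously; this is part of the same Patterson--Sullivan package you invoke, so the fix is just to state it.
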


\begin{proof} In order for $\omega_\mu$ to be a KMS state for $\sigma^b$ at inverse temperature $D$, we need to know that the probability $\mu$ is $e^{Db}$-conformal. Fix $g\in \G$. We have
\begin{align*}
-2\la gx,gy\ra= b(g^{-1})(x)+b(g^{-1})(y)-2\la x,y\ra
\end{align*}
for all $x,y\in \G$. This identity extends by continuity to the boundary, leading to
\begin{align*}
d(g\xi,g\eta)^2= e^{b(g^{-1})(\xi)}\:e^{b(g^{-1})(\eta)} \: d(\xi,\eta)^2
\end{align*}
for all $\xi,\eta\in \bd\G$. It follows, see \cite[Lem.8]{Nic}, that
\begin{align*}
\frac{d(g^*\mu)}{d\mu}=e^{Db(g^{-1})}
\end{align*} for each $g\in G$. Up to replacing $g$ by $g^{-1}$, this is means that $\mu$ is $e^{Db}$-conformal, as desired.

Now let us turn to the uniqueness statement, in which $\G$ is assumed to be torsion-free. We wish to apply the Kumjian - Renault criterion, so let us check that $b$ satisfies the non-vanishing condition of Theorem~\ref{thm: KR}. Let $g$ be a non-trivial element of $\G$. Then the following properties hold. Firstly, the infinite cyclic subgroup generated by $g$ is quasi-isometrically embedded in $\G$. Secondly, there are two distinct points $g^+, g^-\in \bd \G$ such that $g^n\to g^+$ and $g^{-n}\to g^-$ as $n\to \infty$. Thirdly, the points fixed by $g$ on the boundary are precisely $g^+$ and $g^-$.

For the group Busemann cocycle, we have 
\begin{align*}
b(g)(g^n)=|g^n|-|g^{n-1}|,\qquad  b(g)(g^{-n})=|g^{-n}|-|g^{-(n+1)}|=-b(g)(g^{n+1}).
\end{align*}

Letting $n\to \infty$, the second relation yields
\begin{align*}
b(g)(g^-)=-b(g)(g^+),
\end{align*}
while the first leads to
\begin{align*}
b(g)(g^+)=\lim_{n\to \infty} \big(|g^n|-|g^{n-1}|\big)=\lim_{n\to \infty} \frac{|g^n|}{n}
\end{align*}
by the discrete l'Hospital rule. But the right-hand limit is positive, as $g$ is undistorted, and we conclude that $b(g)(g^+)>0$ and $b(g)(g^-)<0$.

We deduce that a KMS state for $\sigma^b$ at inverse temperature $D'$ must be induced by a probability $\mu'$ on $\bd\G$ which is $e^{D'b}$-conformal. Results of Coornaert \cite{Coo}, and their generalizations to the roughly geodesic context by Blach\`ere, Ha\"{\i}ssinsky, and Mathieu \cite{BHM}, imply that $D'=D$ and $\mu'=\mu$.
\end{proof}

\section{The Haagerup cocycle for hyperbolic groups}\label{sec: haag}

\subsection{The Haagerup cocycle for free groups} Let $\F$ be a non-abelian free group. Then $\F$ admits a proper isometric action on a Hilbert space. This is due to Haagerup \cite{Haa}, up to a slight reinterpretation, and his elegant construction runs as follows. 

Consider the standard Cayley graph of $\F$ with respect to the free generators and their inverses. This is a regular undirected tree. Let $\vec{E}$ be the set of its \emph{oriented} edges. Then $\F$ acts on $\vec{E}$ in a natural way, and we may consider the corresponding orthogonal representation of $\F$ on $\ell^2(\vec{E})$. Next, we perturb this linear isometric action by a cocycle $c:\F\to \ell^2(\vec{E})$. Given $g\in \F$, let $c_g$ be the following function on $\vec{E}$: $c_g$ is supported on the geodesic path joining $g$ to the identity $1$, and for an oriented edge $e$ lying on this path we value $c_g(e)$ to be $+1$ or $-1$ according to whether $e$ points towards or away from $g$. In short:
\begin{align*}
c_g=\sum_{e\in \{1\to g\}} \delta_e-\sum_{e\in \{g\to 1\}} \delta_e
\end{align*}
The cocycle property, $c_{gh}=c_g+g.c_h$ for all $g,h\in \F$, can be seen by drawing the geodesic tripod defined by $1$, $g$, and $gh$, and noting that the oriented edges lying on the leg towards $g$ cancel out. Clearly, $c_g\in \ell^2(\vec{E})$ and
\begin{align*}
\|c_g\|^2_2=2|g|.
\end{align*}
In particular, the cocycle $c$ is proper: $\|c_g\|_2\to \infty$ as $g\to \infty$ in $\F$. It follows that the affine isometric action of $\F$ on $\ell^2(\vec{E})$ given by $(g,\phi)\mapsto g.\phi+c_g$ is proper. Note that this construction applies, in fact, to any space $\ell^p(\vec{E})$ for $p\in [1,\infty)$.

We wish to adapt Haagerup's construction to a general hyperbolic context, and we start by recasting the above cocycle in a more convenient form. Firstly, we think of the oriented edge-set $\vec{E}$ as the set $\{(x,y)\in \F\times\F: |x,y|=1\}$. Secondly, we note that the cocycle $c$ can be described by in metric terms by the following formula:
\begin{align}\label{eq: newform}
c_g(x,y)=\la g,x\ra-\la g,y\ra \tag{$\dagger$}
\end{align}
Recall that $\la\cdot,\cdot\ra$ denotes the Gromov product based at the identity. In this form, the cocycle property is even more transparent: writing
\begin{align*}
c_g(x,y)=\tfrac{1}{2}\big(|x|-|g^{-1}x|\big)-\tfrac{1}{2}\big(|y|-|g^{-1}y|\big)
\end{align*}
we obtain the coboundary formula $c_g=F-g.F$, for $F(x,y)=\tfrac{1}{2}\big(|x|-|y|\big)$.

\subsection{The Haagerup cocycle for hyperbolic groups}
Let $\G$ be a hyperbolic group, which we may assume to be non-elementary. Endow $\G$ with a strongly hyperbolic admissible metric. We also consider a coarse relative of the underlying set we have used in the free group case. Namely, let
\begin{align*}
\Delta=\big\{(x,y)\in \G\times\G: K-C\leq |x,y|\leq K+C\big\}
\end{align*} 
where $C\geq 0$ is a rough geodesic constant, and $K>0$ is another constant. For the purposes of the following theorem, we ask that $K>2C$. Note that $\Delta$ is non-empty. This can be seen by choosing a convenient point along a rough geodesic from the identity to some sufficiently remote group element. 

The group $\G$ acts on $\Delta$, by $g.(x,y)=(gx,gy)$. Let $c_g$ be defined on $\Delta$ by the metric formula \eqref{eq: newform}. Then $c$ is a cocycle for $\G$, for the same reasons as explained above. 

\begin{thm}\label{thm b}
For large enough $p\in [1,\infty)$, the affine isometric action of $\G$ on $\ell^p(\Delta)$ given by $(g,\phi)\mapsto g.\phi+c_g$ is well-defined and proper.
\end{thm}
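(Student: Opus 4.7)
The plan is to verify in turn the cocycle property for $c$, the well-definedness of $c_g$ as an element of $\ell^p(\Delta)$ for large $p$, and properness of the resulting affine action. The cocycle identity is formal: a direct computation gives the coboundary presentation $c_g = F - g.F$ with $F(x,y) = \tfrac{1}{2}(|x|-|y|)$, exactly as in the free-group case of the previous subsection, and this forces $c_{gh} = c_g + g.c_h$. The same expression delivers the uniform bound $|c_g(x,y)| \leq |x,y| \leq K+C$ on $\Delta$.

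For well-definedness, the task is to show $\sum_{(x,y) \in \Delta} |c_g(x,y)|^p < \infty$. I would extract exponential decay from strong hyperbolicity: choosing the middle point in the strong hyperbolicity inequality first as $y$ and then as $x$ (with basepoint the identity) gives
\[\big|e^{-\la g,x\ra} - e^{-\la g,y\ra}\big| \leq e^{-\la x,y\ra},\]
and the mean value theorem applied to $t \mapsto e^{-t}$ upgrades this to
\[|c_g(x,y)| = |\la g,x\ra - \la g,y\ra| \leq e^{\max(\la g,x\ra,\la g,y\ra) - \la x,y\ra}.\]
Using $\max(\la g,x\ra, \la g,y\ra) \leq |g|$ together with $\la x,y\ra \geq \tfrac{1}{2}(|x|+|y|) - \tfrac{1}{2}(K+C)$ on $\Delta$, the right-hand side decays exponentially in $|x|+|y|$ once this quantity exceeds $2|g|$. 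Since the number of pairs $(x,y) \in \Delta$ with $|x|+|y|$ of a given size is controlled by the exponential growth rate $v$ of $\G$, splitting the series at $|x|+|y| = 2|g|$ (using the uniform bound $K+C$ for the finitely many small pairs and the mean-value decay for the tail) produces convergence for every $p > v$.

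For properness, I would exhibit $\gtrsim |g|$ pairs on which $|c_g|$ is bounded below by a positive constant. Fix a rough geodesic $\gamma:[0,|g|] \to \G$ from $1$ to $g$, and consider the pairs $(\gamma(jK), \gamma((j+1)K))$ for $j = 0, 1, \ldots, \lfloor |g|/K\rfloor - 1$. The rough geodesic property gives $|\gamma(jK), \gamma((j+1)K)| \in [K-C, K+C]$, placing these pairs in $\Delta$, and applying the same property to the four distances defining $c_g$ yields $c_g(\gamma(jK), \gamma((j+1)K)) \in [-K-2C, -K+2C]$. Under the hypothesis $K > 2C$, $|c_g|$ is at least $K - 2C > 0$ on each such pair, so $\|c_g\|_p^p \geq \lfloor |g|/K \rfloor (K-2C)^p$ and $\|c_g\|_p \to \infty$ as $|g| \to \infty$.

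The main obstacle will be the well-definedness step, specifically the interplay between the two regimes of the estimate: pairs with $|x|+|y| \lesssim |g|$ are controlled only by the uniform bound, while the exponential decay dominates only beyond that. Convergence requires $p$ to exceed the exponential growth rate of $\G$, so that the growth factor in the pair count is absorbed by the decay factor. The properness step is conceptually clearer, but leans on the $K > 2C$ hypothesis to keep $K - 2C$ strictly positive.
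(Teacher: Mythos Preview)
Your proposal is correct and follows essentially the same route as the paper: the strong-hyperbolicity inequality combined with the mean value theorem for the $\ell^p$ estimate, and a partition of a rough geodesic into $K$-steps for properness. The paper streamlines your well-definedness step by using the single bound $|c_g(x,y)|\leq e^{|g|}e^{-\langle x,y\rangle}$ and summing directly, without splitting into the two regimes $|x|+|y|\lessgtr 2|g|$; otherwise the arguments coincide (and your sign $c_g\approx -K$ along the geodesic is in fact the correct one).
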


\begin{proof} 
For the action to be well-defined, we need to have $c_g\in \ell^p(\Delta)$ for each $g\in \G$. An application of the mean value theorem to the function $t\mapsto e^{-t}$ yields
\begin{align*}
\big|e^{-\la g,x\ra}-e^{-\la g,y\ra}\big|\geq e^{-\max\{\la g,x\ra, \la g,y\ra\}}\: |\la g,x\ra-\la g,y\ra|.
\end{align*} 
The left-hand side is at most $e^{-\la x,y\ra}$, thanks to strong hyperbolicity. On the right-hand side, both $\la g,x\ra$ and $\la g,y\ra$ are at most $|g|$. It follows that
\begin{align*}
|c_g(x,y)|\leq e^{|g|}\:e^{-\la x,y\ra}.
\end{align*}
We complete the argument by showing that $e^{-\la \cdot,\cdot\ra}\in \ell^p(\Delta)$ for large enough $p\in [1,\infty)$. If $(x,y)\in \Delta$ then $\la x,y\ra\geq |x|-|x,y|\geq |x|-(K+C)$. We deduce that
\begin{align*}
\sum_{(x,y)\in \Delta}e^{-p\:\la x,y\ra}\leq C_1 \sum_{(x,y)\in \Delta} e^{-p|x|}\leq C_2 \sum_{x\in \G}e^{-p|x|}\end{align*}
and the latter sum converges when $p$ is large enough. 

For the action to be proper, we need to argue that $\|c_g\|_p\to \infty$ as $g\to \infty$ in $\G$. In fact, we show that there are constants $C',C''>0$, depending only on $K$, $C$, and $p$, such that
\begin{align*}
\|c_g\|^p_p\geq C'|g|-C''
\end{align*}
for each $g\in \G$.

Let $\gamma:[a,b]\to\G$ be a rough geodesic joining the identity to $g$. The basic idea is that $|c_g(x,y)|$ is roughly $|x,y|$ whenever $x$ and $y$ lie on $\gamma$, and that we can find about $|g|/K$ pairs of points on $\gamma$ that belong to $\Delta$. Now let us be precise.

Consider the elements $\gamma(t_i)\in \G$ arising from a partition $a=t_0<\ldots<t_n\leq b$ into $n$ intervals of length $K$, and a remainder of length less than $K$. Then $|\gamma(t_i),\gamma(t_{i+1})|$ is within $C$ of $|t_i-t_{i+1}|=K$, so $(\gamma(t_i),\gamma(t_{i+1}))\in \Delta$. Also, $c_g(\gamma(t_i),\gamma(t_{i+1}))$ can be written as
\begin{align*}
\tfrac{1}{2}\big(|\gamma(a),\gamma(t_i)|-|\gamma(b),\gamma(t_i)|\big)-\tfrac{1}{2}\big(|\gamma(a),\gamma(t_{i+1})|-|\gamma(b),\gamma(t_{i+1})|\big)
\end{align*}
which is within $2C$ of
\begin{align*}
\tfrac{1}{2}\big(|a-t_i|-|b-t_i|\big)-\tfrac{1}{2}\big(|a-t_{i+1}|-|b-t_{i+1}|\big)=t_{i+1}-t_i=K.
\end{align*}
In particular, $c_g(\gamma(t_i),\gamma(t_{i+1}))\geq K-2C>0$, according to our assumption on $K$. Hence 
\begin{align*}
\|c_g\|^p_p= \sum_{(x,y)\in \Delta} \big|c_g(x,y)\big|^p\geq \sum_{i=0}^{n-1} \big|c_g(\gamma(t_i),\gamma(t_{i+1}))\big|^p\geq (K-2C)^pn.
\end{align*} 

On the other hand, we can relate $n$ and $|g|$. The way we defined the partition implies that $K(n+1)>b-a$, and $b-a\geq |g|-C$ by using the rough geodesic property at the endpoints. Therefore $n\geq \big(|g|-(K+C)\big)/K$, and the desired claim follows.
\end{proof}

We end by pointing out that the cocycle used in \cite{Nic} is the boundary analogue of \eqref{eq: newform}, namely $c_g(\xi,\eta)=\la g,\xi\ra-\la g,\eta\ra$ for $\xi,\eta\in \bd \G$.

\begin{ack}
I thank Jean Renault for discussions around \S\ref{prelim}.
\end{ack}

\medskip

\end{document}